\definecolor{fgreen}{RGB}{44,144, 14}
\renewenvironment{proof}{{\bfseries Proof.}}{\qed}
\numberwithin{equation}{section} 
\newtheorem{theorem}{Theorem}[section] 
\newtheorem{proposition}[theorem]{Proposition} 
\newtheorem{lemma}[theorem]{Lemma} 
\theoremstyle{definition}
\newtheorem{definition}[theorem]{Definition} 
\newtheorem{remark}[theorem]{Remark} 
\newtheorem{example}[theorem]{Example}
\def\R{\mathbb R}
\def\C{\mathbb C}
\def\F{\mathbb F}
\def\H{\mathbb H}
\def\ib{\mathbf {i}}
\def\jb{\mathbf {j}}
\def\kb{\mathbf {k}}
\def\D{\mathbb D}
\def\N{\mathbb N}
\def\R{\mathbb R}
\def\g{\mathcal G}
\newcommand{\GL}{\mathrm{GL}}
\def\R{\mathbb {R}}
\def\C{\mathbb {C}}
\def\N{\mathbb {N}}
\def\H{\mathbb {H}}
\def\F{\mathbb {F}}
\def\ib{\mathbf {i}}
\def\jb{\mathbf {j}}
\def\g{\mathfrak {g}}
\def\l{\mathfrak {l}}
\def\lto{\longrightarrow}
\def\GL{\rm GL}
\newcommand{\secref}[1]{Section~\ref{#1}}
\newcommand{\thmref}[1]{Theorem~\ref{#1}}
\newcommand{\lemref}[1]{Lemma~\ref{#1}}
\newcommand{\remref}[1]{Remark~\ref{#1}}
\newcommand{\propref}[1]{Proposition~\ref{#1}}
\begin{document}

\title[Reversibility  in $\mathrm{Aff}(n,\D) $  ]{Reversibility of Affine Transformations} 
 \author[K. Gongopadhyay,  T. Lohan   and  C. Maity]{Krishnendu Gongopadhyay, Tejbir Lohan   and  Chandan Maity}

\address{Indian Institute of Science Education and Research (IISER) Mohali,
 Knowledge City,  Sector 81, S.A.S. Nagar 140306, Punjab, India}
\email{krishnendug@gmail.com, krishnendu@iisermohali.ac.in}

 \address{Indian Institute of Science Education and Research (IISER) Mohali,
 Knowledge City,  Sector 81, S.A.S. Nagar 140306, Punjab, India}
\email{tejbirlohan70@gmail.com,  ph18028@iisermohali.ac.in}

\address{Indian Institute of Science Education and Research (IISER) Mohali,
 Knowledge City,  Sector 81, S.A.S. Nagar 140306, Punjab, India}
 \email{maity.chandan1@gmail.com, cmaity@iisermohali.ac.in }

\makeatletter
\@namedef{subjclassname@2020}{\textup{2020} Mathematics Subject Classification}
\makeatother

 \subjclass[2020] {Primary 53A15; \, Secondary  20E45,  32M17, 15B33}
\keywords{ Affine group, reversible elements. strongly reversible elements, real elements, strongly real elements, adjoint reality.}


\begin{abstract}
 An element $g$ in a group $G$ is called  \emph{reversible} if $g$ is conjugate to $g^{-1}$ in $ G $. An element $g$ in $G$ is \emph{strongly reversible}   if $ g $  is conjugate to $g^{-1}$ by an involution in $G$. The group  of affine transformations of $\D^n$  may be identified with the semi-direct product $\mathrm{GL}(n, \D) \ltimes \D^n $, where $\D:=\R, \C$ or $ \H $.
  This paper classifies reversible and strongly reversible elements  in the affine group $\mathrm{GL}(n, \D) \ltimes \D^n $.
 
\end{abstract}

\maketitle 

\section{Introduction} \label{sec-intro-1}
Let $G$  be a group. An element $g \in G$ is called  {\it reversible}  or {\it real}  if $g$ is conjugate to $g^{-1}$ in $ G $. An element $g \in G$ is {\it strongly reversible}  or {\it strongly real}  if $ g $  is conjugate to $g^{-1}$ in $G$ by an involution  (i.e., by an element of order at most $2$) in $ G $. Equivalently, an element is strongly reversible if it is a product of two involutions from $G$; see \remref{rem-equvi-str}.
The idea of `reversible elements' originated in mathematical and physical systems from different directions, \cite{Ar, De, Se, lamb,  OS}.  From the algebraic point of view, the terms real and strongly real are used instead of reversible and strongly reversible. 
Investigation of reversible and strongly reversible elements in a group is an active area of current research; see \cite{OS} for an elaborate exposition of this theme from the geometric point of view.  
A complete classification of reversible and strongly reversible elements is not available in the literature  except for the case of a  few families of infinite groups, which include the compact Lie groups, real rank one classical groups, and isometry groups of hermitian spaces; see  \cite{OS, BG, GL}. In this article, by \textit{reversibility} in a group $G$,  we mean a classification of reversible and strongly reversible elements in $G$.

Let $\D :=\R,\C$ or $ \H$.  The space $ \D^n $  equipped with a (right) $\D$-Hermitian form gives a model for Hermitian geometry. When $ \D=\R $, this is the well-known classical Euclidean geometry. The reversibility problem in the isometry group  $ {\rm O}(n)\ltimes \R^n $ of the $n$-dimensional Euclidean space was classified by Short in \cite{Sh}. This has been extended in \cite{GL} for  the isometry group  $ {\rm U}(n,\F)\ltimes \F^n $ of the $\F$-Hermitian space, where $\F:= \C $ or $\H$.

Considering $\D^n $ as an affine space, the group  of automorphisms of $\D^n$, denoted by $  \mathrm{Aff}(n,\D) $,  is given by  $\mathrm{GL}(n,\D)\ltimes \D^n $.  
 The affine space is important to understand the affine structure on geometric manifolds; see the tome \cite{Go} for details. Understanding reversible and strongly reversible elements in the affine group $  \mathrm{Aff}(n,\D) $ is a natural problem of interest. In this paper, we have investigated this problem.
Our main result is the following: 

\begin{theorem}\label{thm-affine-main-theorem}
Let $g  = (A,v) \in  \mathrm{Aff}(n,\D) $ be an arbitrary element,  where  $ \D = \R, \C$ or $\H$.  Then $g$ is reversible (resp.  strongly reversible) in $\mathrm{Aff}(n,\D) $ if and only if $A$ is reversible (resp.  strongly reversible) in $\mathrm{GL}(n,\D)$. 
Further,  for $\D = \R $ or $\C$, the following statements are equivalent.
	\begin{enumerate}
		\item $g$ is  reversible in $\mathrm{Aff}(n,\D) $.
			\item $g$ is strongly reversible in $\mathrm{Aff}(n,\D) $.
	\end{enumerate}
\end{theorem}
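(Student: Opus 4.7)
The forward direction of the first claim is immediate from the group projection $\pi\colon \mathrm{Aff}(n,\D) \to \mathrm{GL}(n,\D)$, $(A,v) \mapsto A$: since $\pi$ preserves conjugation and sends involutions to involutions, $(A,v)$ reversible (resp.\ strongly reversible) forces $A$ to be so in $\mathrm{GL}(n,\D)$. For the converse, I would begin with the conjugation identity
\[
(B,w)(A,v)(B,w)^{-1} \;=\; \bigl(BAB^{-1},\; Bv + (I - BAB^{-1})w\bigr),
\]
from which $(B,w)$ reverses $(A,v)$ if and only if $B$ reverses $A$ in $\mathrm{GL}(n,\D)$ and $w$ solves the affine equation $(A-I)w = -v - AB\,v$ (using $AB = BA^{-1}$, a consequence of $BAB^{-1} = A^{-1}$).

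Given a reverser $B$ of $A$, the task reduces to solving this $w$-equation, possibly after replacing $B$ by $CB$ for some $C \in Z_{\mathrm{GL}(n,\D)}(A)$ (which is still a reverser of $A$). My plan is to split $\D^n = V_1 \oplus W$, with $V_1 = \ker(A-I)^{\infty}$ the generalized $1$-eigenspace and $W$ the sum of the generalized eigenspaces for eigenvalues $\ne 1$. Any reverser of $A$ preserves this decomposition, since $\ker(A-I)^k = \ker(A^{-1}-I)^k$, and $(A-I)|_W$ is invertible, so the restriction of $w$ to $W$ is uniquely determined. On $V_1$, writing $A = I + N$ with $N$ nilpotent, the equation becomes $Nw = -v - AB\,v$, solvable exactly when the right side lies in $\mathrm{Im}(N)$. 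Passing to the quotient $V_1/\mathrm{Im}(N)$, on which $A$ acts as the identity, the obstruction is the class $[v] + [Bv]$, which we kill by a suitable choice of $C \in Z(A)$. This block-by-block analysis on the Jordan blocks of $A|_{V_1}$ is where the main technical work lies.

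For strong reversibility, we additionally impose $(B,w)^2 = (I,0)$, i.e.\ $B^2 = I$ and $Bw = -w$. An efficient reformulation is: if $A = \tau_1\tau_2$ with $\tau_i^2 = I$, then $(A,v) = (\tau_1,u_1)(\tau_2,u_2)$ with $u_i \in \ker(\tau_i + I)$ is solvable if and only if $v \in \ker(\tau_1+I) + \ker(\tau_2+I)$. The principal obstacle is to show that some factorization of $A$ into involutions can always be chosen for which this membership holds for a given $v$; I would exploit the many factorizations of $A$ (via conjugation by $Z(A)$ and modification by commuting involutions) together with the same Jordan-block analysis on $V_1$ to secure enough flexibility.

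Finally, for $\D = \R$ or $\C$, the equivalence (1)$\iff$(2) follows from the first half of the theorem combined with the fact---reproved in the paper's earlier sections via adjoint reality---that reversible and strongly reversible elements coincide in $\mathrm{GL}(n,\R)$ and $\mathrm{GL}(n,\C)$.
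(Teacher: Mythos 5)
Your setup is sound and matches the paper's reduction: the forward implication via the projection $(A,v)\mapsto A$, the conjugation identity, the splitting of $\D^n$ into the generalized $1$-eigenspace $V_1$ and its $A$-invariant complement $W$ (this is the paper's Lemma \ref{lem-affine-conj-form}), the observation that any reverser of $A$ preserves this splitting, and the unique solvability of the $w$-equation on $W$. But the proof stops exactly where the theorem's actual content begins. On $V_1$ you must show that the obstruction class $[v]+[Bv]$ in $V_1/\mathrm{Im}(N)$ can always be made to vanish, and you defer this to ``a suitable choice of $C\in Z(A)$'' and ``block-by-block analysis.'' That is not a proof: a priori the map $v\mapsto [v]+[CBv]$ could fail to be surjective onto the obstruction space as $C$ ranges over $Z(A)$, and for the strong case you must simultaneously keep $CB$ an involution, which severely constrains $C$. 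The paper resolves this by exhibiting, for a single block $\mathrm{J}(1,n)$, a concrete reversing involution $B=\Omega(1,n)$ whose last row is $(0,\dots,0,-1)$, so that the last row of $A^{-1}+B$ vanishes and the system $(A^{-1}-\mathrm{I}_n)w=(A^{-1}+B)v$ is consistent for \emph{every} $v$ (Lemma \ref{lem-affine-rev-unipotent-block}); for strong reversibility it sidesteps the computation entirely by proving that $(\mathrm{J}(0,n),x)$ is strongly $\mathrm{Ad}$-real in $\g\l(n,\D)\oplus_\iota\D^n$ with an explicit diagonal involution and then exponentiating (Lemmas \ref{lem-affine-str-rev-nilpotent-block}, \ref{lem-affine-str-rev-unipotent-block}). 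Some such explicit construction, or an equivalent argument, is indispensable.

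There is also an error in your reformulation of the strong case. If $(\tau_1,u_1)(\tau_2,u_2)=(A,v)$ with each factor an involution, then $v=u_1+\tau_1u_2$ with $u_i\in\ker(\tau_i+\mathrm{I}_n)$, so the set of achievable $v$ for a fixed factorization $A=\tau_1\tau_2$ is $\ker(\tau_1+\mathrm{I}_n)+\tau_1\ker(\tau_2+\mathrm{I}_n)$, not $\ker(\tau_1+\mathrm{I}_n)+\ker(\tau_2+\mathrm{I}_n)$; since $\tau_1$ need not preserve $\ker(\tau_2+\mathrm{I}_n)$ (one only has $\tau_1\tau_2\tau_1^{-1}=A\tau_2A^{-1}$), these subspaces differ in general. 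The final paragraph, deducing the equivalence of (1) and (2) for $\D=\R,\C$ from the corresponding statement in $\mathrm{GL}(n,\D)$, is correct and is how the paper concludes as well.
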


This theorem answers a  problem raised in {\cite[p. 78--79]{OS}}. Note that the classification of the reversible and strongly reversible elements in $  \mathrm{Aff}(n,\D) $ is intimately related to the  corresponding classification  in  $\mathrm{GL}(n,\D)$. 
Such  classification  in $\mathrm{Aff}(n,\D)$ can be obtained by combining  \thmref{thm-affine-main-theorem} with the reversibility in $\mathrm{GL}(n,\D)$. The reversibility in ${\rm GL}(n, \D)$ is well-known for $\D=\R$ or $\C$, cf. \cite{Wo,  OS}, and this has been extended over the  quaternions recently, cf. \cite{GLM2}.

To prove the above theorem, first, we investigate conjugacy in ${\rm Aff}(n,\D)$ in \lemref{lem-affine-conj-form}. Then using \lemref{lem-affine-conj-form}, reversibility in ${\rm Aff}(n,\D)$ boils down to the case when the linear part of the affine transformation is unipotent. We consider the Lie algebra $\mathfrak{aff}(n, \D)$ of the affine group ${\rm Aff}(n,\D)$ and consider the adjoint action; see Equation \eqref{Ad-action-affine}.  Then we apply the notion of ``adjoint reality'' introduced in \cite{GM}, also see \secref{unisub},  to classify the strongly reversible elements in ${\rm Aff}(n,\D)$ whose linear parts are unipotent; see \propref{prop-affine-str-rev-unipotent}.

The reversibility problem is closely related to the problem of finding the involution length of a group. The {\it involution length} of a group $ G $ is the least integer $m$ so that any element of $ G $ can be expressed as a product of $ m $ involutions in $ G $; see {\cite[p. 76]{OS}}. Now we state our second result. We refer to Definition \ref{def-det} for the notion of quaternionic determinant.
\begin{theorem}\label{thm-affine-involution-lenght}
Let  $ g = (A,v) \in {\rm Aff}(n,\D) $ such that  $ \mathrm{det}(A) \in \{-1,1\}$. Then $g$ can be written as a product of at most four involutions for $ \D=\R, \C $ or $\H$.
\end{theorem}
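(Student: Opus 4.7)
The plan is to reduce the problem to a factorization of the linear part $A$ in $\mathrm{GL}(n,\D)$ and then invoke Theorem \ref{thm-affine-main-theorem} to lift strong reversibility from $\mathrm{GL}(n,\D)$ to $\mathrm{Aff}(n,\D)$. The first step is to use the classical theorem of Gustafson, Halmos and Radjavi, which asserts that an element of $\mathrm{GL}(n,\F)$ (for $\F = \R$ or $\C$) is a product of four involutions if and only if its determinant equals $\pm 1$; the analogue for $\mathrm{GL}(n,\H)$ can be derived from the classification recorded in Theorem \ref{thm-reversible-GL(n,D)}. Applying this yields a factorization $A = B_1 B_2 B_3 B_4$ with each $B_j^2 = I$ in $\mathrm{GL}(n,\D)$.

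Next, group the factors as $A = A_1 A_2$ with $A_1 := B_1 B_2$ and $A_2 := B_3 B_4$. Each $A_j$, being a product of two involutions in $\mathrm{GL}(n,\D)$, is strongly reversible there. In the semidirect product $\mathrm{Aff}(n,\D) = \mathrm{GL}(n,\D) \ltimes \D^n$ one has the direct identity
\[
(A,\, v) \;=\; (A_1,\, 0)\,(A_2,\, A_1^{-1} v),
\]
which exhibits $g$ as a product of two affine elements whose linear parts are the strongly reversible $A_1$ and $A_2$. By Theorem \ref{thm-affine-main-theorem}, each factor $(A_j, w_j)$ is itself strongly reversible in $\mathrm{Aff}(n,\D)$, and therefore a product of two involutions in $\mathrm{Aff}(n,\D)$. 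Concatenating these two decompositions writes $(A,v)$ as a product of $2+2 = 4$ involutions in $\mathrm{Aff}(n,\D)$, which is the desired statement.

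The main obstacle is securing the initial four-involution factorization of $A$ over the skew field $\H$: for $\D = \R$ or $\C$ this is the classical Gustafson--Halmos--Radjavi theorem, while for $\D = \H$ it must either be extracted from Theorem \ref{thm-reversible-GL(n,D)} or proved directly from the canonical form of quaternionic matrices. As a self-contained alternative that avoids any appeal to an external factorization theorem, one can attempt to construct the four affine involutions $(B_j, w_j)$ directly: fix a four-involution linear decomposition $A = B_1 B_2 B_3 B_4$ whose translated $(-1)$-eigenspaces $W_1,\ B_1 W_2,\ B_1 B_2 W_3,\ B_1 B_2 B_3 W_4$ (where $W_j := \ker(B_j + I)$) jointly span $\D^n$, and then solve the translation constraint $w_1 + B_1 w_2 + B_1 B_2 w_3 + B_1 B_2 B_3 w_4 = v$ with $w_j \in W_j$. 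The delicate point in this alternative is arranging the spanning condition; once this is done, the argument is immediate.
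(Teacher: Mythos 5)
Your argument is correct, and it takes a genuinely different route from the paper's. The paper first conjugates $g$ into the normal form $(T\oplus U,\ \mathbf{0}\oplus \tilde v)$ of Lemma \ref{lem-affine-conj-form}, writes $T=B_1B_2B_3B_4$ as four \emph{linear} involutions via Lemma \ref{lem-involution-length-GL(n,D)}, writes the unipotent affine piece $(U,\tilde v)$ as two affine involutions via Proposition \ref{prop-affine-str-rev-unipotent}, and then assembles the four affine involutions block-diagonally. You instead factor $A=(B_1B_2)(B_3B_4)$ directly, note that each grouped factor $A_j$ is strongly reversible in $\mathrm{GL}(n,\D)$ (being a product of two involutions), split $(A,v)=(A_1,\mathbf{0})(A_2,A_1^{-1}v)$ — which is the correct identity for the semidirect product law $(A_1,v_1)(A_2,v_2)=(A_1A_2,\,A_1v_2+v_1)$ — and then invoke Theorem \ref{thm-affine-main-theorem} to promote each factor to a product of two affine involutions. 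This is shorter and avoids both the normal-form reduction and the explicit block bookkeeping; the trade-off is that you use the full strength of Theorem \ref{thm-affine-main-theorem} as a black box, whereas the paper's proof only needs the weaker Proposition \ref{prop-affine-str-rev-unipotent}. Since Theorem \ref{thm-affine-main-theorem} is proved before this point, there is no circularity. Both arguments ultimately rest on the same input, namely the four-involution factorization of determinant-$\pm1$ elements of $\mathrm{GL}(n,\D)$, which the paper supplies as Lemma \ref{lem-involution-length-GL(n,D)} (reducing the quaternionic case to the complex one via the Jordan form); your parenthetical suggestion that the quaternionic case follows from Theorem \ref{thm-reversible-GL(n,D)} alone is not quite how the paper obtains it, but the needed lemma is available either way. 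Your final ``self-contained alternative'' is not needed for the proof and, as you acknowledge, leaves the spanning condition unverified, so it should be regarded as a remark rather than part of the argument.
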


\subsection{Structure of the paper}
The structure of the paper is as follows.  In  \secref{sec-prel-2}, we fix some notation and recall some necessary  background.
In  \secref{sec-affine-6}, we consider the affine group and prove the main result of this article, Theorem \ref{thm-affine-main-theorem}. Finally, in  \secref{sec-involution-length}, we investigate the product of involutions in the affine group $ {\rm Aff}(n,\D)$ and prove \thmref{thm-affine-involution-lenght}.

\section{Preliminaries} \label{sec-prel-2}
Let $\H:=\R\, +\,\R\ib +\,\R\jb+\,\R\kb$  be the division algebra of Hamilton’s quaternions. We will use the notation $\D$ to denote either $\R, \C$ or  $ \H$ unless otherwise specified. We consider $ \D^n$ as a right $\D$-module. We begin by recalling some basic notions of quaternion linear algebra. We refer the reader to \cite[Chapter 3, Chapter 5]{rodman} for a detailed exposition of the theory of linear transformations over the quaternions.

\begin{definition}[cf.~{\cite[p.  90]{rodman}}]\label{def-eigen-M(n,H)}
Let $  \mathrm{M}(n,\H)$ be the algebra of $n \times n$ matrices over $\H$. A non-zero vector $v \in \H^n $ is said to be a (right) eigenvector  of $A \in  \mathrm{M}(n,\H)$ corresponding to a (right) eigenvalue  $\lambda \in \H $ if the equality $ A v = v\lambda $ holds.
\end{definition}

Note that eigenvalues of $A\in  \mathrm{M}(n,\H)$ occur in similarity classes 
and each similarity class of eigenvalues contains a unique complex representative with  non-negative imaginary part. Here,  instead of similarity classes of eigenvalues, we will consider the unique complex representative with non-negative imaginary part.

\begin{definition}[cf.~{\cite[p.  94]{rodman}}] \label{def-jordan}
A {\it Jordan block} $\mathrm{J}(\lambda,m)$ is an $m \times m$ matrix with $ \lambda \in \D$ on the diagonal entries,  $1$ on all of the super-diagonal entries and zero elsewhere.  We will refer to a block diagonal matrix where each block is a Jordan block as  \textit{Jordan form}. 
\end{definition}

Jordan canonical forms in $\mathrm{GL}(n,\D)$ are well studied in the literature; see {\cite[Chapter 5,  Chapter 15]{rodman}}. Recall that	an element $U \in \mathrm{GL}(n,\D)$ is called unipotent if each eigenvalue of $U$ equals to $ 1$.  In our convention, we shall include identity as the only unipotent element, which is also semisimple. Next result provides Jordan form for a given unipotent element in $ \mathrm{GL}(n,\D)
$.
\begin{lemma}[{cf.~	 \cite[Theorem 15.1.1,  Theorem 5.5.3]{rodman}}] \label{lem-Jordan-M(n,D)}
For every unipotent element $A \in  \mathrm{GL}(n,\D)$,  there is an invertible matrix $S \in  \mathrm{GL}(n,\D)$ such that $SAS^{-1}$ has the following  form: 
\begin{equation}\label{eq-Jordan-unipotent}	SAS^{-1} =\mathrm{I}_{m_0} \, \oplus \,  \mathrm{J}(1,  \,  m_1) \, \oplus  \, \cdots \,  \oplus  \, \mathrm{J}(1, \, m_k),
\end{equation}
where $ m_i \in \N,$  for all $ i \in \{0, 1,2, \dots, k\}.$
The form  (\ref{eq-Jordan-unipotent}) is uniquely determined by $A$ up to a permutation of diagonal blocks.
\end{lemma}

Now we recall a well-known result, which gives equivalence between reversible and strongly reversible elements in ${\rm GL }(n,\D) $ for $\mathbb{D} = \R$ or $ \C$.
\begin{proposition} [cf.~{\cite[ Theorems 4.7]{OS}}]  \label{Prop-str-rev-GL-R-C}
Let $A \in {\rm GL }(n,\D) $,  where $\mathbb{D} = \R$ or $ \C$.  Then $A$ is reversible  in   $ {\rm GL }(n,\D) $ if and only if $A$ is strongly reversible in   $ {\rm GL }(n,\D) $.
\end{proposition}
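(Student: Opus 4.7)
The implication from strong reversibility to reversibility is immediate from the definitions. The plan for the converse is a reduction to the building blocks appearing in the Jordan decomposition, combined with the explicit reversing involutions constructed in \S \ref{subsec-reverser-GL-nD}.

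First, since strong reversibility is preserved under conjugation by $\mathrm{GL}(n,\D)$, I may assume without loss of generality that $A$ is already in the canonical Jordan form described in Lemma \ref{lem-Jordan-M(n,D)}. Under the hypothesis that $A$ is reversible, Theorem \ref{thm-reversible-GL(n,D)} parts \eqref{part-reversible-GL(n,R)} and \eqref{part-reversible-GL(n,C)} then tell us that the Jordan blocks of $A$ group into the following \emph{building blocks}:
\begin{enumerate}
\item singleton blocks $\mathrm{J}(\pm 1, m)$;
\item (for $\D=\R$ only) singleton real Jordan blocks $\mathrm{J}_{\R}(\alpha \pm \ib \beta, 2\ell)$ with $\alpha^2+\beta^2=1$;
\item pairs $\mathrm{J}(\lambda, s)\oplus \mathrm{J}(\lambda^{-1}, s)$ with $\lambda \in \D\setminus\{\pm 1, 0\}$;
\item (for $\D=\R$ only) pairs $\mathrm{J}_{\R}(\mu\pm \ib\nu, 2t)\oplus \mathrm{J}_{\R}\bigl(\tfrac{\mu}{\mu^2+\nu^2}\mp\ib\tfrac{\nu}{\mu^2+\nu^2}, 2t\bigr)$ with $\mu^2+\nu^2\neq 1$.
\end{enumerate}

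For each building block, an explicit reversing involution has already been produced: Lemma \ref{lem-rev-jodan-type-1overC} handles case (i), Lemma \ref{lem-rev-jodan-type-1 over R} handles case (ii), Lemma \ref{lem-rev-jodan-type-2 over C} handles case (iii), and Lemma \ref{lem-rev-jodan-type-2 over R} handles case (iv). Write $A=A_1\oplus A_2\oplus\cdots\oplus A_r$, where each $A_i$ is one of the four building blocks above, and let $g_i\in \mathrm{GL}(n_i,\D)$ be the involution supplied by the corresponding lemma, so that $g_i A_i g_i^{-1}=A_i^{-1}$ and $g_i^2=\mathrm{I}_{n_i}$. Setting
\[
g := g_1 \oplus g_2 \oplus \cdots \oplus g_r \in \mathrm{GL}(n,\D),
\]
the block-diagonal element $g$ is again an involution, and $gAg^{-1}=A^{-1}$, so $A$ is strongly reversible. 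This completes the argument.

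There is no genuine obstacle here: the content has been absorbed into the four case-by-case lemmas in \S \ref{subsec-reverser-GL-nD}, which were proved by passing to the Lie algebra, exploiting the strong Ad-reality results of \S \ref{subsec-str-ad-reality-Jordan} together with the exponential map. The only subtlety worth underlining is that the pairing of blocks supplied by Theorem \ref{thm-reversible-GL(n,D)} is exactly the pairing required by the pair-type lemmas, so that every block in the Jordan form of $A$ belongs to some building block and the block-diagonal assembly actually realizes $A^{-1}$.
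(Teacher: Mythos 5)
Your proposal is correct and follows essentially the same route as the paper: the paper's proof of Proposition \ref{Prop-str-rev-GL-R-C} likewise invokes Theorem \ref{thm-reversible-GL(n,D)} to partition the Jordan blocks and then appeals to Lemmas \ref{lem-rev-jodan-type-1overC}, \ref{lem-rev-jodan-type-2 over C}, \ref{lem-rev-jodan-type-1 over R}, and \ref{lem-rev-jodan-type-2 over R} for the reversing involutions. Your write-up merely makes explicit the block-diagonal assembly that the paper leaves implicit.
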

We would like to mention that the above equivalence does not hold  for  the case $\D =\H$, e.g., $A = (\ib) \in {\rm GL }(1,\H)$ is reversible but not strongly reversible  in ${\rm GL }(1,\H)$.

\section{Reversibility in the affine group $\mathrm{Aff}(n,\D)$ } \label{sec-affine-6}
Consider the affine space $\D^n$,  where $\D =\R,\C $ or $\H$.  Let $ \mathrm{Aff}(n,\D)$  denote the affine group of all invertible affine transformations from $\D^n$ to $\D^n$.  Each element  $g = (A,v)$ of $\mathrm{GL}(n,\D)\ltimes \D^n $ acts on  $\D^n$ as affine transformation  
$$g(x) = A(x) + v\,,$$ where $A \in {\GL}(n,\D) $ is called the \textit{linear part }of $g$ and $v \in \D^n$ is called the \textit{translation part} of $g$.    This action identifies the affine group $\mathrm{Aff}(n,\D)$ with $ \mathrm{GL}(n,\D)\ltimes \D^n $.  We can embed $\D^n$  into $\D^{n+1}$  as the plane
$ \mathbf{P} := \{(x,1) \in \D^{n+1} \mid x \in \D^n \}$. 
Consider the embedding $\Theta : \mathrm{Aff}(n,\D) \longrightarrow \mathrm{GL}(n+1,\D)$ defined as

\begin{equation}  \label{eq-embedding-affine}
	\Theta ( (A,v)) = \begin{pmatrix} A   &  v \\
		\mathbf{0} & 1  \\ 
	\end{pmatrix}\,,
\end{equation} 
where  $\mathbf{0}$  is the zero vector in $\D^n$.  Note that action of  $ \Theta  ( \mathrm{Aff}(n,\D))$ on the plane $\mathbf{P}$ is exactly the same as the action of $\mathrm{Aff}(n,\D)$ on $\D^n $.   In this section,  we will classify reversible and strongly reversible elements in the affine group $\mathrm{Aff}(n,\D)$.  
We begin with an example. 
\begin{example}\label{examp-str-rev-translation} Let $g= (\mathrm{I}_n, v) \in \mathrm{Aff}(n,\D) $.  Consider $g_1= (-\mathrm{I}_n, \mathbf{0})  $ and $g_2=  (-\mathrm{I}_n, -v)$ in $\mathrm{Aff}(n,\D)$.  Then $g_1$ and $g_2$ are  involutions in $\mathrm{Aff}(n,\D) $ such that 
$$ g= g_1 \, g_2, \hbox{ i.e., }  (\mathrm{I}_n, v)  = (-\mathrm{I}_n, \mathbf{0}) \, (-\mathrm{I}_n, -v).$$
Hence,  $g$ is strongly reversible in $\mathrm{Aff}(n,\D) $.
\qed \end{example}

In the next result, we obtain necessary and sufficient conditions for the reversible elements  in  $  \mathrm{Aff}(n,\D) $.  

\begin{lemma}\label{lem-affine-cond-rev}
Let $g = (A,v) \in  \mathrm{Aff}(n,\D) $ be an arbitrary element.  Then $g$ is reversible in $\mathrm{Aff}(n,\D) $ if and only if there exists an element $h = (B,w) \in  \mathrm{Aff}(n,\D) $ such that both the following conditions hold:
\begin{enumerate}
\item \label{cond-rev-aff-1} $BAB^{-1} = A^{-1}$,
\item \label{cond-rev-aff-2} $(A^{-1} - \mathrm{I}_n)(w) = (A^{-1} + B)(v)$.
\end{enumerate}
\end{lemma}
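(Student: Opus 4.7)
The plan is to unwind the semidirect product structure on $\mathrm{Aff}(n,\D) = \mathrm{GL}(n,\D) \ltimes \D^n$ and simply equate components. This is a direct computation; no deep input is required.

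First I would record the multiplication and inversion formulas in the semidirect product. For $(A,v),(B,w) \in \mathrm{Aff}(n,\D)$ acting on $\D^n$ by $x \mapsto Ax + v$, the composition law is $(A,v)(B,w) = (AB,\, Aw + v)$, and hence $(A,v)^{-1} = (A^{-1},\, -A^{-1}v)$. This follows immediately from the identification given via the embedding $\Theta$ in \eqref{eq-embedding-affine}, since block matrix multiplication of the corresponding $(n+1)\times(n+1)$ matrices reproduces exactly these formulas.

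Next, for $h = (B,w)$ and $g = (A,v)$, I would compute $hgh^{-1}$ step by step:
\begin{equation*}
	hgh^{-1} = (B,w)(A,v)(B^{-1},-B^{-1}w) = (BA,\, Bv + w)(B^{-1},-B^{-1}w) = \bigl(BAB^{-1},\ Bv + w - BAB^{-1}w\bigr).
\end{equation*}
On the other hand, $g^{-1} = (A^{-1}, -A^{-1}v)$. Therefore $hgh^{-1} = g^{-1}$ holds if and only if the linear parts and translation parts match, namely
\begin{equation*}
	BAB^{-1} = A^{-1} \quad \text{and} \quad Bv + w - BAB^{-1}w = -A^{-1}v.
\end{equation*}
Substituting the first equation into the second gives $Bv + w - A^{-1}w = -A^{-1}v$, which after rearrangement becomes $(A^{-1} - \mathrm{I}_n)(w) = (A^{-1} + B)(v)$. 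This is precisely condition \eqref{cond-rev-aff-2}.

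The forward direction consists of observing that if $g$ is reversible via some $h=(B,w)$, then both displayed equations hold, hence so do \eqref{cond-rev-aff-1} and \eqref{cond-rev-aff-2}. The converse is the same computation read backwards: given $(B,w)$ satisfying the two conditions, the element $h = (B,w)$ conjugates $g$ to $g^{-1}$. I do not anticipate any obstacle; the only point requiring a brief care is the sign bookkeeping when passing from the translation part equation to the stated form, which the rewriting above handles cleanly.
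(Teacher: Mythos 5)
Your proposal is correct and follows essentially the same route as the paper: both compute $hgh^{-1}$ explicitly (the paper via the action on $x\in\D^n$, you via the semidirect product multiplication law, which is the same calculation), equate linear and translation parts with those of $g^{-1}=(A^{-1},-A^{-1}v)$, and rearrange to obtain $(A^{-1}-\mathrm{I}_n)(w)=(A^{-1}+B)(v)$. No issues.
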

\begin{proof}
Note that $g^{-1}(x) = A^{-1}(x) - A^{-1}(v),  \hbox {and }  h^{-1}(x) = B^{-1}(x) - B^{-1}(w)$ for all $x \in \D^n$.  This implies for all  $x \in \D^n$,  we have
\begin{equation*}
hgh^{-1}(x) = h (AB^{-1}(x) - AB^{-1}(w)+v) = BAB^{-1}(x) - BAB^{-1} (w) +B(v) + w.
\end{equation*}
Therefore,  $hgh^{-1} = g^{-1} \Leftrightarrow BAB^{-1} = A^{-1},   \hbox {and }  - A^{-1}(v) = - BAB^{-1} (w) +B(v) + w$.
This proves the lemma.
\end{proof}

The following lemma gives necessary and sufficient conditions for the strongly  reversible elements in $  \mathrm{Aff}(n,\D) $.  
\begin{lemma}\label{lem-affine-cond-str-rev}
Let $g = (A,v) \in  \mathrm{Aff}(n,\D) $ be an arbitrary element.  Then $g$ is  strongly reversible in $\mathrm{Aff}(n,\D) $ if and only if there exists an element $h = (B,w) \in  \mathrm{Aff}(n,\D) $ such that both the following conditions hold:
\begin{enumerate}
\item \label{cond-str-rev-aff-1} $BAB^{-1} = A^{-1},$  and $ B^2 = \mathrm{I}_n$,
\item \label{cond-str-rev-aff-2}$(B+\mathrm{I}_n)(w)=\mathbf{0}$,  and $(B+A^{-1})(w-v)=\mathbf{0}$.
\end{enumerate}
\end{lemma}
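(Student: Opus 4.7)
The statement is essentially the involution refinement of Lemma~\ref{lem-affine-cond-rev}, so the natural strategy is to layer the condition ``$h$ is an involution'' on top of that earlier characterization and then algebraically simplify.

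First I would compute $h^{2}$ directly from the semi-direct product structure: for $h=(B,w)\in\mathrm{Aff}(n,\D)$,
\[
h^{2}(x)\;=\;B(B(x)+w)+w\;=\;B^{2}(x)+B(w)+w,
\]
so $h^{2}=\mathrm{id}$ if and only if $B^{2}=\mathrm{I}_{n}$ and $(B+\mathrm{I}_{n})(w)=\mathbf{0}$. This already produces the first condition in the statement together with the first equation of the second condition.

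Next I would invoke Lemma~\ref{lem-affine-cond-rev}, which says that $hgh^{-1}=g^{-1}$ is equivalent to $BAB^{-1}=A^{-1}$ together with
\[
(A^{-1}-\mathrm{I}_{n})(w)\;=\;(A^{-1}+B)(v).
\]
The key observation is that, once the involution hypothesis $B(w)=-w$ is available, the right-hand side can be rewritten as $A^{-1}(v)+B(v)$ and the left-hand side as $A^{-1}(w)+B(w)$, so the displayed equation becomes
\[
A^{-1}(w)+B(w)-A^{-1}(v)-B(v)\;=\;\mathbf{0},
\]
which is precisely $(B+A^{-1})(w-v)=\mathbf{0}$.

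Both directions then drop out cleanly. For the forward direction, the existence of an involution $h=(B,w)$ reversing $g$ forces $B^{2}=\mathrm{I}_{n}$, $(B+\mathrm{I}_{n})(w)=\mathbf{0}$, $BAB^{-1}=A^{-1}$, and, via the substitution $B(w)=-w$, the equation $(B+A^{-1})(w-v)=\mathbf{0}$. Conversely, given (1) and (2), the same substitution reverses the computation, showing $(A^{-1}-\mathrm{I}_{n})(w)=(A^{-1}+B)(v)$, so Lemma~\ref{lem-affine-cond-rev} yields $hgh^{-1}=g^{-1}$, while the involution computation above gives $h^{2}=\mathrm{id}$. There is no genuine obstacle here; the only subtlety is keeping track of the equivalence between $(A^{-1}-\mathrm{I}_{n})(w)=(A^{-1}+B)(v)$ and $(B+A^{-1})(w-v)=\mathbf{0}$ \emph{under} the assumption $B(w)=-w$, which is what makes the rewriting work in both directions.
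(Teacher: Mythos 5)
Your proposal is correct and follows essentially the same route as the paper: compute $h^{2}(x)=B^{2}(x)+B(w)+w$ to extract the involution conditions, invoke Lemma~\ref{lem-affine-cond-rev} for the reversing conditions, and use $B(w)=-w$ to convert $(A^{-1}-\mathrm{I}_{n})(w)=(A^{-1}+B)(v)$ into $(B+A^{-1})(w-v)=\mathbf{0}$. If anything, your write-up is slightly more complete, since the paper only records the forward implication of that last rewriting while you verify explicitly that it is reversible under the hypothesis $B(w)=-w$.
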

\begin{proof}
Note that  $h = (B,w) \in  \mathrm{Aff}(n,\D) $ is an involution  if and only if  $h^2(x) = B^2(x)+ B(w) + w = x \hbox { for all } x \in \D^n$.  This implies that $ B^2 = \mathrm{I}_n$,  and   $(B + \mathrm{I}_n)(w)=0$. Further,  in view of \lemref{lem-affine-cond-rev},  $hgh^{-1}= g^{-1} $ if and only if  conditions (\ref{cond-rev-aff-1}) and (\ref{cond-rev-aff-2}) of \lemref{lem-affine-cond-rev}  hold.  Observe that equation  $(B + \mathrm{I}_n)(w)=0$ and equation $(A^{-1} - \mathrm{I}_n)(w) = (A^{-1} + B)(v)$ implies $(B+A^{-1})(w-v)=\mathbf{0}$. 
This proves the lemma.
\end{proof}

\subsection{Conjugacy in  the affine group $\mathrm{Aff}(n,\D)$}

In the affine group $\mathrm{Aff}(n,\D)$, up to conjugacy,  we can consider every element in a more simpler form, which is  demonstrated in the next lemma. Recall that a unipotent element  $U \in \mathrm{GL}(n,\D)$  has only $1$ as an eigenvalue.

\begin{lemma}\label{lem-affine-conj-form}
Every element $g$ in $ \mathrm{Aff}(n,\D)$, up to conjugacy, can be written as $g = (A, v)$ such that $A = T \oplus U$,  where $T \in \mathrm{GL}(n-m,\D)$,  $U \in \mathrm{GL}(m,\D)$ such that $T$ does not have eigenvalue $1$, $U$ has only $1$ as eigenvalue,  and $v$ is of the form $v = [0,0,\dots,0,v_1,v_2,\dots,v_m] \in \D^n$, where  $0\leq m \leq n$ is the multiplicity of eigenvalue $1$ of the linear part of $g$.  Further, if $1$ is not an eigenvalue of the linear part of $g$ (i.e., $m=0$),  then up to conjugacy, $g$ is of the form $g = (A, \mathbf{0} )$.
\end{lemma}

\begin{proof}
Let $g \in  \mathrm{Aff}(n,\D) $ be an arbitrary element.  In view of the Jordan decomposition in  $\mathrm{GL}(n,\D)$, after conjugating $g$ by a suitable element $(B,\mathbf{0}) \in  \mathrm{Aff}(n,\D)$, we can assume $g = (A,w)$ such that $A = T  \oplus  U$,  where $T \in \mathrm{GL}(n-m,\D)$ does not have eigenvalue $1$ and $U \in \mathrm{GL}(m,\D)$ is unipotent.  There are two possible cases:
\begin{enumerate}
\item Suppose $1$ is not an eigenvalue of $A$.So the linear transformation $A -  \mathrm{I}_n$ is invertible. Therefore, we can choose $x_o =(A -  \mathrm{I}_n )^{-1} (w)  \in \D^n$. Consider $h = (\mathrm{I}_n, x_o) \in \mathrm{Aff}(n,\D)$.  For all $x \in \D^n$, we have
$$ hgh^{-1} (x) = hg(x- x_o) = h({A}x - {A}x_o + w ) ={A}x + w - ({A}-{\rm I}_n)x_o.$$
This implies $hgh^{-1} (x)= {A}(x) + \mathbf{0} $ for all $x \in \D^n$, since $x_o =({A} -  {{\rm I}_n} )^{-1} (w)$. 
\item Let $1$ be an eigenvalue  of ${A}$.  In this case $m >0$ and 
${A} - {{\rm I}_n}$ has rank $ n-m  < n$. So we can choose an element $u \in \D^n$ having the last $m$ coordinates zero such that $ [({A} - {{\rm I}_n}  )(u)]_i = w_i \hbox{ for all }  \ 1  \leq i \leq n-m, \hbox{where } w = [w_i]_{1  \le i \le n}$. Let $v = w{ - }({A} {-}  {{\rm I}_n} )(u)$. 
Then $v = [0,0, \dots, 0,w_{n-m+1}, w_{n-m+2},\dots,w_n ] \in \D^n$.
Now consider $h = (  \mathrm{I}_n,u) \in \mathrm{Aff}(n,\D)$.  For all $x \in \D^n$,  we have
$$hgh^{-1}(x) = hg(x-u) = h(Ax - Au +w) = Ax + w-(A - \mathrm{I}_n)(u) = Ax +v.$$ 
\end{enumerate}
This completes the proof.
\end{proof}

\begin{remark}
The idea of the above proof is in the same line of arguments as in \cite[Lemma 3.1]{GL}. But here, we have to deal with the subtle situation when the linear part of affine transformations contains a unipotent Jordan block.
\qed \end{remark}

\subsection{Elements in $\mathrm{Aff}(n,\D)$ having  a fixed point}
Recall that if the linear part of an element in $\mathrm{Aff}(n,\D) $ does not have eigenvalue $1$, then it will have a fixed point in $\D^n$. In this case, the classification of reversible  and  strongly reversible elements in $\mathrm{Aff}(n,\D) $ follows from the corresponding classification in $\mathrm{GL}(n,\D)$. 

\begin{proposition}\label{prop-affine-rev-str-rev-type-1} 
Let $g  = (A,v) \in  \mathrm{Aff}(n,\D) $ be an arbitrary element such that $1$ is not an eigenvalue of the linear part $A$ of $g$. Then $g$ is reversible (resp.  strongly reversible) in $\mathrm{Aff}(n,\D) $ if and only if $A$ is reversible (resp.  strongly reversible) in $\mathrm{GL}(n,\D)$.  Further,  for $\D = \R $ or $\C$,  the following are equivalent.
\begin{enumerate}
\item $g$ is  reversible in $\mathrm{Aff}(n,\D) $.
\item $g$ is strongly reversible in $\mathrm{Aff}(n,\D) $.	\end{enumerate}
\end{proposition}

\begin{proof}
Using \lemref{lem-affine-conj-form}, up to conjugacy, we can assume $g = (A, \mathbf{0} )$.  The proof now follows from  \propref{Prop-str-rev-GL-R-C}.
\end{proof}

\subsection{Elements in $\mathrm{Aff}(n,\D)$ with unipotent linear part}\label{unisub}
In this section, we shall use the adjoint reality approach introduced in \cite{GM} to show that every element of $\mathrm{Aff}(n,\D)$  with a unipotent linear part is strongly reversible.  
In view of  \lemref{lem-affine-conj-form} and \propref{prop-affine-rev-str-rev-type-1},  classification of reversible and strongly reversible elements in $\mathrm{Aff}(n,\D) $ reduces to the case when the linear part of the affine group element is unipotent.

In view of \lemref{lem-Jordan-M(n,D)}, every unipotent element in $\mathrm{GL}(n,\D)$ can be written as direct sum of unipotent Jordan blocks; see Equation \eqref{eq-Jordan-unipotent}. Therefore,  it is enough to consider the case when the linear part of an element $g  \in  \mathrm{Aff}(n,\D) $ is equal to the unipotent Jordan block $\mathrm{J}(1,n)$. We will show that  $ g =(\mathrm{J}(1,  \,  n),v) \in  \mathrm{Aff}(n,\D) $ is strongly reversible in $\mathrm{Aff}(n,\D) $ for all $v \in \D^n$ and $n\in \N$. In the following example, we will illustrate this for the case $n=6$ by constructing an explicit involution which conjugate $g$ to $g^{-1}$.

\begin{example}\label{exam-affine-str-rev-unipotent}
Let  $g  = (A,v) \in  \mathrm{Aff}(6,\D) $ be such that $A =  \mathrm{J}(1,  \,  6) \in \mathrm{GL}(6,\D) $,  where  $ \D = \R, \C$ or $\H$.  We will show that  $g$ is strongly reversible in $\mathrm{Aff}(6,\D) $.
	
$	\hbox{Here}, 	A^{-1}  = \begin{pmatrix}
		1 & -1  &1 &-1&1 &-1  \\
		& 1 &  -1&1&-1&1   \\
		&   &  1&-1&1&-1  \\
		&   &  &1&-1&1  \\
		&   &    &  & 1 &-1   \\
		&   &    &  & &1   
	\end{pmatrix} .$
Let $B := \begin{pmatrix}
		1 & 4  &6 &4&1 &0  \\
		& -1 &  -3&-3&-1&0   \\
		&   &  1&2&1&0  \\
		&   &  &-1&-1&0  \\
		&   &    &  & 1 &0   \\
		&   &    &  & &-1   
	\end{pmatrix}$ be an element of  ${\rm GL }(6,\D)$. Note that $B$  is  an involution in ${\rm GL }(6,\D)$ and it conjugates $A$ to $A^{-1}$. Further, we have

\begin{equation}
		B+  \mathrm{I}_6 = \begin{pmatrix}
			2 & 4  &6 &4&1 &0  \\
			& 0 &  -3&-3&-1&0   \\
			&   &  2&2&1&0  \\
			&   &  &0&-1&0  \\
			&   &    &  & 2 &0   \\
			&   &    &  & &0
		\end{pmatrix},   \quad  B  +A^{-1} = \begin{pmatrix}
			2 & 3  &7 &3&2 &-1  \\
			& 0 &  -4&-2&-2&1  \\
			&   &  2&1&2&-1  \\
			&   &  &0&-2&1  \\
			&   &    &  &2 &-1   \\
			&   &    &  & &0  
		\end{pmatrix}. 
	\end{equation}
Note that both the matrices $B + \mathrm{I}_6$ and $B + A^{-1}$ have the same rank, which is equal to 3. Moreover, their corresponding diagonal entries are equal. Now, consider $h  = (B,w) \in  \mathrm{Aff}(6,\D) $, where $w \in \D^n$ is defined as:
	\begin{equation}
		w = \begin{pmatrix}
			4v_1+6v_2+10v_3+4v_2   \\
			-2v_1-3v_2-7v_3-3v_4 \\
			2v_3+v_4   \\
			-2v_3-v_4   \\
			0 \\
			v_6-2v_5
		\end{pmatrix}.
	\end{equation}
Then $h$ satisfies all the conditions of  \lemref{lem-affine-cond-str-rev}.  Therefore,  $h$ is an involution such that $hgh^{-1} =g^{-1}$.  Hence,  $g$ is strongly reversible in $\mathrm{Aff}(6,\D) $.
	\qed \end{example}

The complexity of computation involved in Example \ref{exam-affine-str-rev-unipotent} increases as $n$  (size of the Jordan block) increases if we follow the above approach. Therefore, when the linear part of $g  \in  \mathrm{Aff}(n,\D) $ is $\mathrm{J}(1, n)$,  generalizing the above construction to find reversing involution for $g $ seems to be difficult.  We will choose a different path to avoid the computational difficulties and give a significantly simpler proof by considering adjoint reality in the Lie algebra set-up; see \lemref{lem-affine-str-rev-unipotent-block}.

First, let's introduce some notation that will be used in the next part of this section. As before, let $ \D^n $ be the right $\D $-vector space. Consider $ \D^n $ as an abelian Lie algebra. Then $ {\rm Der}_\D\D^n \, \simeq \, \g\l(n,\D)$. Thus we can make the semi-direct product on  $ \g\l(n,\D) \oplus_\iota \D^n$ by setting $ [(A,0), (0, v) ]:= (0, Av) $; see \cite[Chapter 1, \S 4, Example 2]{Kn} for more details. As done for $ {\rm Aff} (n,\D) $ in Equation \eqref{eq-embedding-affine}, consider the embedding  
\begin{equation*} \Psi  \colon \g\l(n,\D) \oplus_\iota \D^n \, \lto \, \g\l(n+1, \D) \,\    \text{ given by }\ 
	\  \Psi ( (X,w)) = \begin{pmatrix} X   &  w \\
		\mathbf{0} & 0  
	\end{pmatrix}\,.
\end{equation*} 
Then the image has the usual Lie algebra structure, and  $\mathfrak{aff}(n, \D) := \g\l(n,\D) \oplus_\iota\D^n$ is the Lie algebra of the linear Lie group $\mathrm{Aff}(n,\D) $.  
Note that the adjoint action of  $ G:=\mathrm{Aff}(n,\D) $ on its Lie algebra $\g:=  \mathfrak{aff}(n, \D)$ is given by 
\begin{align}\label{Ad-action-affine}
{\rm Ad}\colon G \times \g \lto \g \quad ; \quad {\rm Ad}(A,v)\!\cdot\!(X,w) \,=\, \big(AXA^{-1},\, -(AXA^{-1})v + Aw \big).
\end{align}

Now we recall the  notion of adjoint reality for a linear Lie group $G$, which was introduced in \cite{GM}.  
The adjoint action of a linear Lie group $G$ on its Lie algebra $\g$  is given by the conjugation, 
i.e., ${\rm Ad}(g)X:=gXg^{-1}$.    An element $X\in \g$ is called {\it $ {\rm Ad}_G$-real}  if $-X =gXg^{-1} $ for some $g\in G$.  An  $ {\rm Ad}_G$-real element $X\in \g$ is called {\it strongly $ {\rm Ad}_G$-real } if $-X = \tau X \tau^{-1} $ for some involution   $\tau\in G$; see \cite[Definition 1.1]{GM}. Observe that if $-X =gXg^{-1} $ for some $g\in G$,  then  $(\exp (X))^{-1} =g\exp (X)g^{-1} $. Thus, if $X\in \g$ is  $ {\rm Ad}_G$-real (resp. strongly $ {\rm Ad}_G$-real), then $\exp (X)$ is reversible (resp. strongly reversible) in $G$, \cite[Lemma 2.1]{GM}. But the converse is not  true in general. For example,  $X= \mathrm{diag}( 2 \pi \ib, \pi \ib ) \in   \mathfrak{gl}(2, \C)$ is not $ {\rm Ad}_{\mathrm{GL}(2, \C)}$-real,  but $g =\mathrm{diag}( 1, -1 ) = \exp (X) \in   \mathrm{GL}(2, \C)$ is  reversible.

We will investigate the   $ {\rm Ad}_{\mathrm{Aff}(n,\D)}$-real elements in the Lie algebra $\mathfrak{aff}(n, \D)$. Next result gives necessary and sufficient conditions for the strongly $ {\rm Ad}_{\mathrm{Aff}(n,\D)}$-real elements in $ \mathfrak{aff}(n, \D)$. This can be thought of as a Lie algebra version of  Lemma \ref{lem-affine-cond-str-rev}.

\begin{lemma}\label{lem-affine-cond-str-rev-Lie-alg}
Let $ (N,x) \in   \mathfrak{aff}(n, \D)$ be an arbitrary element.  Then $(N,x)$ is  strongly $ {\rm Ad}_{\mathrm{Aff}(n,\D)  }$-real
if and only if there exists an element $h = (B,w) \in  \mathrm{Aff}(n,\D) $ such that both the following conditions hold:
	\begin{enumerate}
		\item \label{cond-str-rev-aff-1-lie-alg}
		$BNB^{-1} = -N,$  and $ B^2 = \mathrm{I}_n$,
		\item \label{cond-str-rev-aff-2-lie-alg}
		$(B+\mathrm{I}_n)(w)=\mathbf{0}$,  and $N(w)\,=\,-(B+\mathrm{I}_n)(x)$.
	\end{enumerate}
\end{lemma}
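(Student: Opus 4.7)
The plan is to translate the strongly $\mathrm{Ad}_{\mathrm{Aff}(n,\D)}$-real condition for $(N,x)$ into explicit matrix equations via the embedding $\rho$ introduced just before the lemma, together with the embedding $\Theta$ from \eqref{eq-embedding-affine}. Writing
$$\rho(N,x) \,=\, \begin{pmatrix} N & x \\ \mathbf{0} & 0 \end{pmatrix}, \qquad \Theta(h) \,=\, \begin{pmatrix} B & w \\ \mathbf{0} & 1 \end{pmatrix},$$
so that $\Theta(h)^{-1} = \begin{pmatrix} B^{-1} & -B^{-1}w \\ \mathbf{0} & 1 \end{pmatrix}$, the defining relation $\mathrm{Ad}(h)(N,x) = -(N,x)$ becomes the matrix identity $\Theta(h)\,\rho(N,x)\,\Theta(h)^{-1} = -\rho(N,x)$.

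The first step is a direct block-matrix computation. Multiplying out gives
$$\Theta(h)\,\rho(N,x)\,\Theta(h)^{-1} \,=\, \begin{pmatrix} BNB^{-1} & -BNB^{-1}w + Bx \\ \mathbf{0} & 0 \end{pmatrix}.$$
Equating entries with $-\rho(N,x)$ yields the two relations $BNB^{-1} = -N$ and $Bx - BNB^{-1}w = -x$. Substituting the first into the second gives $Bx + Nw = -x$, which rearranges to $N(w) = -(B + \mathrm{I}_n)(x)$, producing the second half of condition (2) and the first half of condition (1) simultaneously.

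The second step is to separately impose that $h$ is an involution in $\mathrm{Aff}(n,\D)$. Using the semi-direct product multiplication $h^2 = (B^2,\, Bw + w)$, the equation $h^2 = (\mathrm{I}_n, \mathbf{0})$ splits into $B^2 = \mathrm{I}_n$ and $(B + \mathrm{I}_n)(w) = \mathbf{0}$. Assembling the two steps then yields both directions of the claimed equivalence, since each of the derived equations is reversible.

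I do not expect any substantial obstacle here: the argument is a mechanical bookkeeping exercise, and the non-commutativity of $\H$ causes no issue, since only ordinary matrix multiplications (with all scalars acting on the appropriate side) are involved. Conceptually, this lemma is the Lie-algebra analog of Lemma \ref{lem-affine-cond-str-rev}, obtained by replacing the group condition $h g h^{-1} = g^{-1}$ by the infinitesimal condition $\mathrm{Ad}(h)(N,x) = -(N,x)$, and essentially the same block computation works verbatim at the Lie algebra level.
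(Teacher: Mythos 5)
Your computation is correct and amounts to the same mechanical verification the paper intends: the paper omits the proof outright, declaring it identical to that of Lemma \ref{lem-affine-cond-str-rev}, which carries out exactly this bookkeeping at the level of affine maps $x \mapsto Ax+v$ rather than via the block-matrix embeddings $\rho$ and $\Theta$. Your block-matrix version is an equivalent phrasing of the same argument, and the derived conditions match the lemma exactly.
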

\begin{proof}
We omit the proof as it is identical to that of \lemref{lem-affine-cond-str-rev}.
\end{proof}

The following result will be used in proving  \lemref{lem-affine-str-rev-unipotent-block}.
\begin{lemma}\label{lem-affine-str-rev-nilpotent-block}
Let  $ (N,x) \in  \mathfrak{aff}(n, \D)$ such that $N= \mathrm{J}(0,  \,  n)$,  where  $ \D = \R, \C$ or $\H$.  Then  $ (N, x)$ is strongly $ {\rm Ad}_{\mathrm{Aff}(n,\D)  }$-real.
\end{lemma}

\begin{proof}
For the element  $ (N, x)$, consider $ B:= {\rm diag}((-1)^{n}, (-1)^{n-1}, \dots, 1,-1)_{n \times n} $.  Then condition (\ref{cond-str-rev-aff-1-lie-alg}) of \lemref{lem-affine-cond-str-rev-Lie-alg}  holds.  Further, by choosing the diagonal matrix $B$, the last row of $N$ and $B+\mathrm{I}_n$ are equal to zero vector in $\D^n$.  This implies that for every $x \in \D^n$, the last coordinate of $B+\mathrm{I}_n (x)$ is zero. Since the rank of $N$ is $n-1$, so equation $Nw\,=\,-(B+\mathrm{I}_n)(x)$ is consistent for given  $x \in \D^n$ and has a solution.  
To prove this lemma,  it is sufficient to choose $ w\in \D^n $ so that the condition (\ref{cond-str-rev-aff-2-lie-alg})  of Lemma \ref{lem-affine-cond-str-rev-Lie-alg} holds. This can be done in the following way:
\begin{enumerate}
		\item Let $n$ be even.   Then for $x = [x_{k}]_{n \times 1} \in \D^n$,   take $w = [w_{k}]_{n \times 1} \in \D^n $ such that
		\begin{equation*}
			w_{2k-1}= 0,  \hbox{ and } w_{2k} =-2 \, x_{2k-1},  \hbox{ where } k \in \{1,2,\dots,\frac{n}{2} \}.
		\end{equation*} 
		Here, we get unique $w$  depending on $v$ for our choice of $B$.
		\item Let $n$ be odd.   Then for $x = [x_{k}]_{n \times 1} \in \D^n$,   take $w = [w_{k}]_{n \times 1} \in \D^n $ such that
		\begin{equation*}
			w_1 \in \D,  \,  w_{2k}= 0,  \hbox{ and } w_{2k+1} =-2 \, x_{2k},  \hbox{ where } k \in \{1,2,\dots,\frac{n-1}{2} \}.
		\end{equation*} 
		Here,  for our choice of $B$, we get no condition on $w_1$.  
	\end{enumerate}
Then in view of Lemma \ref{lem-affine-cond-str-rev-Lie-alg},  the element $(N, x)$ is strongly $ {\rm Ad}_{\mathrm{Aff}(n,\D)  }$-real.  Hence, the proof follows.
\end{proof}

The following lemma demonstrates that affine transformations with linear part conjugate to a unipotent Jordan block are strongly reversible.

\begin{lemma}\label{lem-affine-str-rev-unipotent-block}
Let  $ (A,v) \in  \mathrm{Aff}(n,\D) $ such that $A= \mathrm{J}(1,  \,  n)$,  where  $ \D = \R, \C$ or $\H$.  Then $g$ is strongly reversible in $\mathrm{Aff}(n,\D) $.
\end{lemma}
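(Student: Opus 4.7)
\emph{Plan.} The strategy is to realize $g = (A, v)$ as the exponential of a suitable element of the Lie algebra $\g\l(n,\D) \oplus_\iota \D^n$ and then invoke Lemma~\ref{lem-affine-str-rev-nilpotent-block}, bypassing the direct computation of Example~\ref{exam-affine-str-rev-unipotent}. Since $A = \mathrm{J}(1,n) = \mathrm{I}_n + \mathrm{J}(0,n)$ is unipotent and $A - \mathrm{I}_n$ is nilpotent of index $n$, the truncated series
\[
N \,:=\, \log A \,=\, \sum_{k=1}^{n-1} \frac{(-1)^{k+1}}{k}(A - \mathrm{I}_n)^k
\]
terminates and produces a nilpotent matrix with $\exp(N) = A$. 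Writing $A - \mathrm{I}_n = N \cdot P$ where $P = \mathrm{I}_n + \tfrac{N}{2} + \cdots$ is invertible shows $N^{n-1} \neq 0$, and hence the Jordan form of $N$ is exactly $\mathrm{J}(0,n)$.

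Next I would compute the exponential on the semi-direct product. Using the embedding $\rho$, the matrix $M = \rho((N, x))$ satisfies $M^k = \rho((N^k,\, N^{k-1} x))$ for $k \geq 1$, so
\[
\exp((N, x)) \,=\, \bigl( \exp(N),\, \Psi_N(x) \bigr), \quad \text{where } \Psi_N \,:=\, \sum_{k \geq 0} \frac{N^k}{(k+1)!}.
\]
Because $\Psi_N = \mathrm{I}_n + \tfrac{N}{2} + \tfrac{N^2}{6} + \cdots$ is upper unitriangular (as $N$ is nilpotent), it is invertible; setting $x := \Psi_N^{-1}(v)$ yields a pair $(N, x) \in \g\l(n,\D) \oplus_\iota \D^n$ with $\exp((N, x)) = (A, v) = g$.

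After a preliminary conjugation by some $(B_0, \mathbf{0}) \in \mathrm{Aff}(n,\D)$, I may assume the nilpotent part is already $N = \mathrm{J}(0,n)$. Then Lemma~\ref{lem-affine-str-rev-nilpotent-block} supplies an involution $h \in \mathrm{Aff}(n,\D)$ with $h \cdot (N, x) \cdot h^{-1} = -(N, x)$ under the adjoint action. Invoking the naturality identity $h \exp(Y) h^{-1} = \exp(\mathrm{Ad}(h)\, Y)$ then gives
\[
h g h^{-1} \,=\, \exp(-(N, x)) \,=\, \exp((N, x))^{-1} \,=\, g^{-1},
\]
so $g$ is strongly reversible in $\mathrm{Aff}(n,\D)$.

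The main technical point is the inversion of $\Psi_N$ that exhibits $g$ as an exponential; after that the argument is purely formal and sidesteps the combinatorial complexity encountered for $n = 6$ in Example~\ref{exam-affine-str-rev-unipotent}. This is exactly the infinitesimal reduction advertised in the introduction: strong $\mathrm{Ad}_{\mathrm{Aff}(n,\D)}$-reality of a nilpotent element in the affine Lie algebra propagates, via the exponential map, to strong reversibility of the corresponding unipotent-plus-translation element in the affine group.
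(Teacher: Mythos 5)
Your proposal is correct and follows essentially the same route as the paper: realize $(A,v)$, up to conjugation, as $\exp((N,x))$ with $N$ a nilpotent single Jordan block, invoke the strong $\mathrm{Ad}_{\mathrm{Aff}(n,\D)}$-reality of $(\mathrm{J}(0,n),x)$ from Lemma~\ref{lem-affine-str-rev-nilpotent-block}, and transport the reversing involution through the exponential. Your explicit inversion of $\Psi_N = \sum_{k\ge 0} N^k/(k+1)!$ merely fills in the detail, left implicit in the paper, of why a suitable $x$ and conjugator exist.
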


\begin{proof}
Let $ N:=  \mathrm{J}(0,  \,  n) \in \g\l(n,\D)$.
Then $( \sigma, y) \exp((N,x)) (\sigma,y)^{-1}  \,=\,(A, v) $ for some $(\sigma, y) \in  \mathrm{Aff}(n,\D)$. Recall that the Lie algebra $\mathfrak{aff}(n, \D) = \g\l(n,\D) \oplus_\iota\D^n $.
Using Lemma \ref{lem-affine-str-rev-nilpotent-block},  we have  that $ (N, x)  \in  \mathfrak{aff}(n, \D)$ is strongly $ {\rm Ad}_{\mathrm{Aff}(n,\D)  }$-real. 
Let $ (\alpha, z) \in  \mathrm{Aff}(n,\D)$ be an involution so that  $ (\alpha, z) (N,x) (\alpha, z) \,=\,-(N, x)$. By taking the exponential, we have that $ (\alpha, z) \exp((N,x)) (\alpha, z)^{-1} \,=\,\exp(-(N, x))$. Let $g:= ( \sigma, y) (\alpha, z)( \sigma, y)^{-1} $. Then $ g $ is an involution in ${\rm Aff }(n,\D) $ and  $ g(A,v) g^{-1}\,=\, (A,v)^{-1}$; see \cite[Lemma 2.1]{GM}. This completes the proof.
\end{proof}

The next result follows from \lemref{lem-affine-str-rev-unipotent-block},  which will be crucially used in the proof of   \thmref{thm-affine-main-theorem}.
\begin{proposition}\label{prop-affine-str-rev-unipotent}
Let  $g  = (A,v) \in  \mathrm{Aff}(n,\D) $ such that $A$ is a unipotent matrix,  where  $ \D = \R, \C$ or $\H$.  Then $g$ is strongly  reversible in $\mathrm{Aff}(n,\D) $ and consequently $g$ is also reversible in $\mathrm{Aff}(n,\D) $.
\end{proposition}
\begin{proof}
In view of \lemref{lem-Jordan-M(n,D)}, up to conjugacy in $\mathrm{GL}(n,\D)$, we can  assume  $A$ as in Jordan form given by Equation \eqref{eq-Jordan-unipotent}.
Using   \lemref{lem-affine-str-rev-unipotent-block} and Example \ref{examp-str-rev-translation},  we can construct  a suitable $h  = (B,w) \in  \mathrm{Aff}(n,\D) $ such that $hgh^{-1}=g^{-1}$.  Hence, $g$ is strongly  reversible in $\mathrm{Aff}(n,\D) $.  This completes the proof.
\end{proof}

\subsection{Proof of \thmref{thm-affine-main-theorem}} 
Let $g \in {\rm Aff}(n,\D)$  be an arbitrary element. Using  \lemref{lem-affine-cond-rev} and  \lemref{lem-affine-cond-str-rev},  it follows that if $g$ is reversible (resp.  strongly reversible) in $\mathrm{Aff}(n,\D) $ then $A$ is reversible (resp.  strongly reversible) in $\mathrm{GL}(n,\D)$.  

Conversely,  using   \lemref{lem-affine-conj-form},  up to conjugacy,  we can assume that $ g = (A,v)  \in {\rm Aff}(n,\D)$ such that
\begin{equation}\label{eq-affine-main-theorem-1}
	A =\begin{pmatrix} T   &   \\
		& U
	\end{pmatrix},  \quad v = \begin{pmatrix}  \mathbf{0}_{n-m}   \\
		\tilde{v} 
	\end{pmatrix}, 
\end{equation}
where $0 \leq m \leq n$, $ \mathbf{0}_{n-m} $ denotes the zero vector in $\D^{n-m}$ and $T \in \mathrm{GL}(n-m,\D)$,  $U \in \mathrm{GL}(m,\D)$ such that $T$ does not have eigenvalue $1$, $U$ has only $1$ as eigenvalue and $  \tilde{v}  = [v_1,v_2,\dots,v_m] \in \D^m$.  
Here,  $T$ and $U$ do not have a common eigenvalue. This implies that
if $B \in \mathrm{GL}(n,\D)$ is such that $BAB^{-1}=A^{-1}$,  then $B$ has the following form
$$B=\begin{pmatrix} B_1   &   \\
	& B_2
\end{pmatrix},  \, \hbox{where } B_1 \in \mathrm{GL}(n-m,\D),  B_2 \in \mathrm{GL}(m,\D).
$$
Therefore,  if $A$ is reversible (resp.  strongly reversible) in $\mathrm{GL}(n,\D)$,  then $T \in \mathrm{GL}(n-m,\D)$ and $U \in \mathrm{GL}(m,\D)$  are reversible (resp.  strongly reversible).  Consider $h= (U,\tilde{v}) \in {\rm Aff}(m,\D)$, where $U$ is  a unipotent matrix.  Then  \propref{prop-affine-str-rev-unipotent} implies that $h$ is strongly reversible in  ${\rm Aff}(m,\D)$.  Proof of the converse part now follows from Equation  \eqref{eq-affine-main-theorem-1}.

Further,  for the case $\D= \R $ or $\C$,   \propref{Prop-str-rev-GL-R-C} implies that $g$ is reversible  in   $ {\rm Aff }(n,\D) $ if and only if $g$ is strongly reversible in  $ {\rm Aff }(n,\D) $. This completes the proof.
\qed

\section{Product of involutions in $\mathrm{Aff}(n,\D) $ } \label{sec-involution-length}

In this section, we  investigate the involution length in the group  $\mathrm{Aff}(n,\D) $. We shall begin by recalling the basic concept of determinant for matrices over $ \H $. 
For  $A \in  \mathrm{M}(n,\H)$, let  $ A = (A_1) + (A_2) \jb $ for some  $ A_1,  A_2 \in \mathrm{M}(n,\C)$.  Consider the embedding $ \Phi:  \mathrm{M}(n,\H)  \longrightarrow  \mathrm{M}(2n,\C)$ defined as 
\begin{equation}\label{eq-embedding-phi}
	\Phi(A) = \begin{pmatrix} A_1   &  A_2 \\
		- \overline{A_2} & \overline{A_1}  
	\end{pmatrix}\,, 
\end{equation}
where $ \overline{A_j} $ denotes the complex conjugate of $ A_j $.

\begin{definition}\label{def-det}
For $A \in  \mathrm{M}(n,\H)$,  determinant  of $A$ is defined as the determinant  of corresponding matrix $ \Phi(A)$, i.e., $ {\rm det}(A):=  {\rm det}(\Phi(A))$,  where $\Phi$ is as defined in Equation   \eqref{eq-embedding-phi};  see \cite[Section 5.9]{rodman}. In view of the \textit{Skolem-Noether theorem}, the above definition is independent of the choice of the chosen embedding $ \Phi$. \qed
\end{definition}

Recall that if $h =(B,v) \in \mathrm{Aff}(n,\D)$  is an involution,  then $B$ has to be an involution in $\mathrm{GL}(n,\D)$; see \lemref{lem-affine-cond-str-rev}. 
If an element of $\mathrm{GL}(n,\D)$ is a product of involutions, then necessarily its determinant is either $1$ or $ -1$.   
Product of involutions in $\mathrm{GL}(n,\D)$ has been studied in \cite{GHR} and  \cite[Section 4.2.4]{OS} for the case  $ \D=\R$ or $\C$.

In the next result, we investigate the product of involutions in $\mathrm{GL}(n,\D)$. 
\begin{lemma}\label{lem-involution-length-GL(n,D)}
Let  $ \D=\R, \C $ or $\H$.  Every element of $\mathrm{GL}(n,\D)$ with determinant $1$ or $-1$ can be written as a product of at most four involutions.
\end{lemma}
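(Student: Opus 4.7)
The plan is to factor $A$ as a product $A = BC$ where both $B$ and $C$ are strongly reversible in $\mathrm{GL}(n,\D)$. Since each strongly reversible element is, by definition, a product of two involutions, writing $B = \tau_1 \tau_2$ and $C = \tau_3 \tau_4$ immediately gives $A = \tau_1 \tau_2 \tau_3 \tau_4$ as a product of four involutions, as desired.

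By \lemref{lem-Jordan-M(n,D)} it suffices to handle $A$ in Jordan canonical form, since conjugation sends involutions to involutions and therefore preserves the property of being a product of four involutions. The hypothesis $\det(A) = \pm 1$ imposes a multiplicative constraint on the eigenvalues of $A$ that will be used to orchestrate the factorization.

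The construction of $B$ and $C$ proceeds by partitioning (and, when necessary, splitting) the Jordan blocks of $A$ so that both factors have Jordan structures satisfying the reversibility criterion of \thmref{thm-reversible-GL(n,D)} — and, for $\D = \H$, additionally satisfying the even-multiplicity hypothesis of \thmref{thm-strong-rev-GL(n,H)}, so that reversibility upgrades to strong reversibility. As a prototype, for a semisimple block $\mathrm{diag}(\alpha, \beta, \gamma)$ with $\alpha \beta \gamma = 1$, one may take $B = \mathrm{diag}(\alpha\gamma, (\alpha\gamma)^{-1}, 1)$ and $C = \mathrm{diag}(\gamma^{-1}, 1, \gamma)$; a direct check gives $BC = \mathrm{diag}(\alpha, \beta, \gamma)$ and each factor has eigenvalue multiset symmetric under inversion, so both are reversible. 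The general case is handled by inducting on the number of Jordan blocks and applying variants of this splitting trick, with the determinant condition guaranteeing at each step that the residual product of unassigned eigenvalues remains $\pm 1$, so that the next distribution step is always feasible.

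Once $A = BC$ with $B,C$ strongly reversible is produced, \propref{Prop-str-rev-GL-R-C} (for $\D = \R, \C$) or \thmref{thm-strong-rev-GL(n,H)} (for $\D = \H$) yields explicit involution decompositions $B = \tau_1 \tau_2$ and $C = \tau_3 \tau_4$, completing the argument. The main obstacle is the combinatorial construction of the factorization in the $\D = \H$ case: one must ensure that Jordan blocks of $B$ and $C$ corresponding to non-real unit-modulus eigenvalue classes occur with even multiplicity, which requires carefully pairing such blocks across $B$ and $C$ rather than letting them appear alone in either factor. This is where the flexibility afforded by the determinant constraint is essential, and where the construction differs most from the $\R, \C$ cases.
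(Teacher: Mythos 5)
Your strategy---factor $A=BC$ with $B,C$ strongly reversible, hence each a product of two involutions---is a genuinely different route from the paper's. The paper's proof is a two-line reduction: by \lemref{lem-Jordan-M(n,D)} every element of $\mathrm{GL}(n,\H)$ is conjugate to a matrix with complex entries, so the quaternionic case collapses to the complex one, and for $\D=\R,\C$ the statement is quoted directly from \cite[Theorem 4.9]{FS}. You are in effect attempting to reprove that cited theorem from scratch, and as written the attempt has a genuine gap.

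The gap is that your only concrete construction is the prototype for a \emph{diagonal} $3\times 3$ matrix, and ``inducting on the number of Jordan blocks and applying variants of this splitting trick'' does not touch the essential difficulty. If $A$ is a single Jordan block $\mathrm{J}(\lambda,n)$ with $\lambda\neq\pm1$ and $\lambda^{n}=\pm1$, induction on the number of blocks gives nothing and there is no partition to make: one must exhibit an explicit factorization $\mathrm{J}(\lambda,n)=BC$ with $B$ and $C$ each reversible, which is precisely the nontrivial content of the classical theorem (\cite{GHR}, \cite[Theorem 4.9]{FS}) and is not supplied by the diagonal prototype. (The naive splitting $\mathrm{J}(\lambda,n)=\lambda \mathrm{I}_n\cdot(\mathrm{I}_n+\lambda^{-1}\mathrm{J}(0,n))$ fails because $\lambda \mathrm{I}_n$ is central, hence reversible in $\mathrm{GL}(n,\C)$ only when $\lambda=\pm1$.) Likewise, merely distributing the Jordan blocks of $A$ between $B$ and $C$ and padding with identities makes each factor reversible only when each sub-collection is already closed under $\lambda\mapsto\lambda^{-1}$, i.e.\ essentially only when $A$ is already reversible. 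The $\H$-specific even-multiplicity bookkeeping you flag is a real further issue, but it sits downstream of this missing construction. The economical repair is the paper's: pass to the complex Jordan form and invoke \cite[Theorem 4.9]{FS}.
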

\begin{proof}
Using the Jordan decomposition over $\H$,  up to conjugacy, we can assume that every element of $\mathrm{GL}(n,\H)$  is in $\mathrm{GL}(n,\C)$; see {\cite[  Theorem 5.5.3]{rodman}}.    The proof now follows from \cite[Theorem 4.9]{OS}.
\end{proof}

\begin{remark}\label{rem-equvi-str}
Note that an element of a group $G$ is strongly reversible if and only if it can be expressed as a product of two involutions in $G$; see \cite[Proposition 2.12]{OS}.  
\qed
\end{remark}

Next, we will prove \thmref{thm-affine-involution-lenght}.

\textbf{Proof of \thmref{thm-affine-involution-lenght}.}
~Let  $ g = (A,v) \in {\rm Aff}(n,\D) $ be such that $ \mathrm{det}(A) \in \{-1, 1\}$. Then using   \lemref{lem-affine-conj-form},  up to conjugacy, we can assume that
\begin{equation}\label{eq-affine-involution-lenght-1}
	A =\begin{pmatrix} T   &   \\
		& U
	\end{pmatrix},  \quad v = \begin{pmatrix} \mathbf{0}_{n-m}   \\
		\tilde{v} 
	\end{pmatrix}, 
\end{equation}
where  $T \in \mathrm{GL}(n-m,\D)$ and  $U \in \mathrm{GL}(m,\D)$ such that $T$ does not have eigenvalue $1$ and  $U$ has only $1$ as eigenvalue. Here, $0\leq m \leq n$, $ \mathbf{0}_{n-m} $ denotes the zero vector in $\D^{n-m}$  and $  \tilde{v}  = [v_1,v_2,\dots,v_m] \in \D^m$.  Consider $h = (U,\tilde{v} ) \in {\rm Aff}(m,\D) $.  
Using \propref{prop-affine-str-rev-unipotent},  $h$ is strongly reversible in ${\rm Aff}(m,\D) $.  
Therefore,  in view of  \remref{rem-equvi-str},  there exist involutions $h_1 = (P,u)$ and $h_2 = (Q,w)$  in ${\rm GL}(m,\D) \ltimes \D^m $ such that
\begin{equation}\label{eq-affine-involution-lenght-2}
	h = h_1 \,  h_2.
\end{equation}
Further,  note that $\mathrm{det}(A)= \mathrm{det}(T) \,  \mathrm{det}(U) = \mathrm{det}(T)$.  Thus $T \in {\rm GL}(n-m,\D)$  has  determinant either $1$ or $-1$.  In view of  \lemref{lem-involution-length-GL(n,D)},  we have
\begin{equation}\label{eq-affine-involution-lenght-3}
	T= B_1 \, B_2 \, B_3 \, B_4,
\end{equation}
where $ B_i$ is an involution in ${\rm GL}(n-m,\D)$ for all $i \in \{1,2,3,4\}$. Here, $B_i $ may be equal to $\mathrm{I}_{n-m}$ for some $i \in \{1,2,3,4\}$.
Now consider  the following elements in ${\rm Aff}(n,\D) $:
\begin{itemize}
	\item $f_1 := ( B_1 \,  \oplus \,  \mathrm{I}_m, \, \mathbf{0}_{n}),$
	\item $ f_2 := ( B_2 \,  \oplus \,  \mathrm{I}_m,\, \mathbf{0}_{n}), $
	\item $ f_3 := ( B_3 \,  \oplus \, P, \, \mathbf{0}_{n-m} \,  \oplus \,  u), $
	\item $ f_4 := ( B_4 \, \oplus \, Q, \, \mathbf{0}_{n-m} \,  \oplus  \, w)$. 
\end{itemize} 
From the above construction,  it is clear that $f_1, f_2, f_3, $ and $f_4$ are involutions in ${\rm Aff}(n,\D) $.
Using  Equations \eqref{eq-affine-involution-lenght-1}, \eqref{eq-affine-involution-lenght-2} and \eqref{eq-affine-involution-lenght-3}, we have  $ g= f_1\, f_2\, f_3\, f_4$.  This completes the proof.
\qed

\subsection*{Acknowledgement}
\medskip Gongopadhyay is partially supported by the SERB core research grant 
CRG/2022/003680. 
Lohan acknowledges  support from the CSIR SRF grant, File No.\,: \\ 09/947(0113)/2019-EMR-I. 
Maity is supported by an NBHM PDF during  this work.


\begin{thebibliography}{AAAA} 

\bibitem[AA]{Ar}  V. I. Arnol$'$d, A. Avez, 
\newblock {\it Ergodic problems of classical mechanics}, 
\newblock Translated from the French by A. Avez W. A. Benjamin, Inc., New York--Amsterdam, 1968 ix+286 pp.

\bibitem[BG]{BG} 
S. Bhunia, K. Gongopadhyay, Reversible quaternionic hyperbolic isometries.  { \it Linear Algebra Appl.},
591 (2020), 268--283.

\bibitem[De]{De} R. L. Devaney, Reversible diffeomorphisms and flows.  \textit{Trans. Amer. Math. Soc.,} 218, 1976, 89--113.
	

\bibitem[Go]{Go}     W. M. Goldman, {\it  Geometric structures on manifolds}. Grad. Stud. Math., 227, American Mathematical Society, Providence, RI, [2022], ©2022. li+409 pp.


\bibitem[GHR]{GHR} W.  H.  Gustafson,  P.  R. Halmos,  H.  Radjavi,  Products of involutions.  { \it Linear Algebra Appl.}, 13 (1976), no. 1--2, 157--162.

\bibitem[GL]{GL} K.  Gongopadhyay, T.  Lohan,  Reversibility of Hermitian isometries.  { \it Linear Algebra Appl.}, 639 (2022), 159--176.

\bibitem[GLM]{GLM2}  K.  Gongopadhyay,  T.  Lohan, C.  Maity,  Reversibility and real adjoint orbits of linear maps: in  {\it  Essays in Geometry}, dedicated to Norbert A'Campo (ed. A. Papadopoulos), European Mathematical Society Press, Berlin, 2023, DOI: 10.4171/IRMA/34/15.
  
\bibitem[GM]{GM} K. Gongopadhyay, C. Maity, Reality of unipotent elements in classical Lie groups. \textit{Bull. Sci. math.} 185 (2023), Paper No. 103261, 29 pp.


\bibitem[Kn]{Kn}  A. W. Knapp, {\it Lie groups beyond an introduction.} Second edition, Progr. Math., 140 Birkhäuser Boston, Inc., Boston, MA, 2002. xviii+812 pp.


\bibitem[La]{lamb} J. S. W. Lamb, Reversing symmetries in dynamical systems.  \textit{J.  Phys. A} 25 (1992), no. 4, 925--937. 	
	

	
\bibitem[O'FS]{OS} A.  G. O'Farrell,   I.  Short, {\it Reversibility in Dynamics and Group Theory}, London Math. Soc. Lecture Note Ser., 416, Cambridge University Press, Cambridge, 2015. xii+281 pp.


\bibitem[Ro]{rodman} L. Rodman, {\it Topics in quaternion linear algebra}.
Princeton Ser. Appl. Math.,  Princeton University Press, Princeton, NJ, 2014. xii+363 pp.
  
  
 \bibitem[Se]{Se} M. B. Sevryuk, {\it Reversible systems}, Lecture Notes in Math., 1211, Springer-Verlag, Berlin, 1986. vi+319 pp.
  
 \bibitem[Sh]{Sh} I. Short, Reversible maps in isometry groups of spherical, Euclidean and hyperbolic space. {\it Math. Proc. R. Ir. Acad.}, 108 (2008), no. 1, 33--46.


\bibitem[Wo]{Wo} M.  J.  Wonenburger, Transformations which are products of two involutions. {\it  J. Math. Mech}, 16 (1966), 327--338.

\end{thebibliography}
\end{document}